\newtheorem{theorem}{Theorem}[section]
\newtheorem{corollary}[theorem]{Corollary}
\newtheorem{lemma}[theorem]{Lemma}
\numberwithin{equation}{section}
\def \bR {\mathbb R}
\def \bR {\mathbb R}
\def \bR {\mathbb R}
\def \cE {\mathcal E}
\def \cF {\mathcal F}
\def \cM {\mathcal M}
\def \cR {\mathcal R}
\def \cS {\mathcal S}
\def \cR {\mathcal R}
\def \cR {\mathcal R}
\def \la {\lambda}
\def \ph {\varphi}
\def \eps {\varepsilon}
\def \lan {\langle}
\def \ran {\rangle}
\def \half{\frac12}
\def \inv{^{-1}}
\def\be{\begin{equation}}
\def\ee{\end{equation}}
\def\bes{\begin{equation*}}
\def\ees{\end{equation*}}
\def\bea{\begin{equation}\begin{aligned}}
\def\eea{\end{aligned}\end{equation}}
\def\beas{\begin{equation*}\begin{aligned}}
\def\eeas{\end{aligned}\end{equation*}}
\title[A maximal  restriction theorem]{A maximal restriction theorem\\ and Lebesgue points of functions in $\cF(L^p)$}
\author{Detlef M\"uller}
\address{Christian-Albrechts-Universit\"at zu Kiel, Mathematisches Seminar,
Ludewig-Meyn-Str. 4, D-24118 Kiel, Germany } 
\email{{\tt mueller@math.uni-kiel.de}}
\author{Fulvio Ricci}
\address{Scuola Normale Superiore, Piazza dei Cavalieri
7, 56126 Pisa, Italy } 
\email{{\tt fricci@sns.it}}
\author{James Wright}
\address{School of Mathematics and Maxwell Institute for Mathematical Sciences University of Edinburgh, Edinburgh EH9 3FD, Scotland} 
\email{{\tt J.R.Wright@ed.ac.uk}}
\thanks{The first author has been supported by the DFG grant DFG-Project MU 761/11-1}
\begin{document}

\maketitle

\section{Introduction}

The  restriction problem for the Fourier transform in $\bR^n$ was introduced by E. M. Stein, who proved the first result in any dimension \cite[p. 28]{Fe}, later improved by the sharper Stein-Tomas method [T]. Since then more and more sophisticated techniques have been introduced to attack the still open problems in this area, concerning the maximal range of exponents for which the restriction inequality holds.

In  two-dimensions, the restriction estimate  for the circle had been proved already, in an almost optimal range of exponents, by Fefferman and Stein \cite[p. 33]{Fe}. Shortly later, sharp estimates were obtained by Zygmund \cite{Z} for the circle and by Carleson and Sj\"olin \cite{CS} and Sj\"olin \cite{Sj} for a  class of curves including strictly convex $C^2$ curves.
  
The present paper does not mean to proceed along these lines, but rather to propose a reflection on the measure-theoretic meaning of the restriction phenomenon and possibly suggest some related problems.
\smallskip

A restriction theorem is usually meant as a family of {\it apriori} inequalities
\be\label{restriction_inequality}
\big\|\widehat f_{|_S}\big\|_{L^q(S,\mu)}\le C\big\|f\|_{L^p(\bR^n)}\ ,
\ee
where $f\in \cS(\bR^n)$, $S$ is a surface with appropriate curvature properties, and $\mu$  a suitably weighted finite surface measure on $S$.  The validity of such an inequality implies the existence of a bounded {\it restriction operator} $\cR:L^p(\bR^n)\longrightarrow L^q(S,\mu)$ such that $\cR f=\widehat f_{|_S}$ when $f$ is a Schwartz function.

\smallskip

In general terms our question is: assuming that \eqref{restriction_inequality} holds, what is the ``intrinsic'' pointwise relation between $\cR f$ and $\widehat f$ for a general $L^p$-function $f$? 

A partial answer follows directly from the restriction inequality.  Assume that \eqref{restriction_inequality} holds for given $p,q$. This forces the condition $p<2$, so that $\widehat f\in L^{p'}$. Fix an approximate identity $\chi_\eps(x)=\eps^{-n}\chi(x/\eps)$ with $\chi\in\cS(\bR^n)$, $\int\chi=1$. Then, with $\psi=\cF\inv\chi$,
$$
\widehat f*\chi_\eps=\widehat{f\psi(\eps\cdot)}
$$
is well defined on $S$ and coincides with $\cR\big(f\psi(\eps\cdot)\big)$. Moreover, $f\psi(\eps\cdot)\to f$ in $L^p(\bR^n)$, so that $(\widehat f*\chi_\eps)_{|_S}\to \cR f$ in $L^q(S,\mu)$. Hence, for a subsequence $\eps_k\to0$, the $\chi_{\eps_k}$-averages of $\widehat f$ converge pointwise to $\cR f$ $\mu$-a.e.


It is natural to ask if the limit over all $\eps$ exists $\mu$-a.e.  We give  positive answers in two dimensions to this and related questions.

We recall that, for a curve $S$ in the plane, necessary conditions on $p,q$ for having \eqref{restriction_inequality} are $p<\frac43$ and $p'\ge 3q$ and that they are also sufficient when $S$ is  $C^2$  with nonvanishing curvature and $\mu$ is the arclength measure, or, more generally, when $S$ is just $C^2$ and convex, and $\mu$ is the {\it affine arclength measure}~\cite{Sj}. Notice that the two measures differ by a factor comparable to the $\frac13$ power of the curvature, so that the affine arclength is concentrated on the set of points with nonvanishing curvature and ordinary arclength is damped near these points.

\begin{theorem}\label{lebesgue}
Let $S$ be a $C^2$ curve in $\bR^2$ and $f\in L^p(\bR^2)$.
\begin{enumerate}
\item[\rm(i)] Assume that $1\le p<\frac43$ and let $\chi\in\cS(\bR^2)$ with $\int\chi=1$. Then, with respect to arclength measure, for almost every $x\in S$ at which the curvature does not vanish, $\lim_{\eps\to0}\widehat f*\chi_\eps(x)=\cR f(x)$.
\item[\rm(ii)] Assume that $1\le p<\frac87$. Then, with respect to arclength measure, almost every $x\in S$ at which the curvature does not vanish is a Lebesgue point for $\widehat f$ and the regularized value of $\widehat f$ at $x$ coincides with $\cR f(x)$.
\end{enumerate}
\end{theorem}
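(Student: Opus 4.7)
The plan is a standard density-plus-maximal-inequality argument. For (i), I would first establish the maximal restriction inequality
\[
\Bigl\|\sup_{\eps>0}|\widehat f\ast\chi_\eps|\Bigr\|_{L^q(S_{\mathrm{nc}},\,ds)} \;\le\; C\|f\|_{L^p(\bR^2)},\qquad 1\le p<\tfrac{4}{3},\ p'=3q,
\]
where $S_{\mathrm{nc}}\subset S$ denotes the subset of nonvanishing curvature and $ds$ is arclength (comparable on $S_{\mathrm{nc}}$, up to a bounded curvature factor, to affine arclength, so that Carleson--Sj\"olin applies). Combined with the obvious pointwise convergence $\widehat f\ast\chi_\eps(\xi)\to\widehat f(\xi)=\cR f(\xi)$ for Schwartz $f$ and the density of $\cS(\bR^2)\subset L^p$, this maximal bound upgrades to a.e.\ convergence on all of $L^p$ by the standard sublevel-set argument.

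To prove the maximal inequality I would use $\widehat f\ast\chi_\eps=\widehat{\,f\,\psi(\eps\,\cdot)\,}$ with $\psi:=\cF^{-1}\chi\in\cS$, $\psi(0)=1$, and linearize: for a measurable selection $\eps(\cdot)>0$ on $S_{\mathrm{nc}}$, the linearized operator
\[
Uf(\xi):=\int f(y)\,\psi(\eps(\xi)\,y)\,e^{-2\pi iy\cdot\xi}\,dy
\]
has $TT^\ast$ kernel
\[
K(\xi,\xi')=\bigl[\chi_{\eps(\xi)}\ast\chi_{\eps(\xi')}\bigr](\xi'-\xi),
\]
which at $\eps\equiv 0$ recovers the kernel underlying the Carleson--Sj\"olin $TT^\ast$ analysis. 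The plan is to verify that $K$ is controlled, uniformly in $\eps(\cdot)$, by a smoothed version of the standard restriction kernel $\widehat{ds}(\xi-\xi')\sim|\xi-\xi'|^{-1/2}$ (obtained via stationary phase on $S_{\mathrm{nc}}$), so that the Hardy--Littlewood--Sobolev estimates underlying Carleson--Sj\"olin apply with uniform constants. An alternative route is a dyadic decomposition $\chi_{2^{-k}}=\chi_1+\sum_{j<k}\phi_{2^{-j}}$ with $\phi_\eta:=\chi_{\eta/2}-\chi_\eta$ of zero integral, combined with a Littlewood--Paley decomposition of $f$ in frequency to extract $\ell^1$-summable decay in $j$ from the interaction of scales.

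For (ii), the kernel $\chi=\mathbf{1}_{B_1}/|B_1|$ is not Schwartz, and Lebesgue points require control of averages of $|\widehat f-\cR f(\xi)|$ rather than $\widehat f-\cR f(\xi)$. I would instead establish the Hardy--Littlewood ball-average maximal inequality
\[
\Bigl\|\sup_{r>0}\,\tfrac{1}{|B_r|}\int_{B_r(\cdot)}|\widehat f|\,\Bigr\|_{L^q(S_{\mathrm{nc}},\,ds)} \;\le\; C\|f\|_{L^p(\bR^2)},\qquad 1\le p<\tfrac{8}{7},
\]
by Cauchy--Schwarz reduction to the $L^2$-mean maximal operator $\sup_{r>0}|B_r|^{-1/2}\|\widehat f\|_{L^2(B_r(\xi))}$, which in turn is controlled by Plancherel together with a Stein--Tomas-type local $L^2$-restriction estimate on short arcs of $S_{\mathrm{nc}}$. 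The narrower range $p<8/7$ is exactly the loss paid for the Cauchy--Schwarz detour, reflecting an interpolation between Stein--Tomas at $p=6/5$ and the Carleson--Sj\"olin endpoint $p<4/3$. Granted this maximal inequality, the Lebesgue-point conclusion at $ds$-a.e.\ $\xi\in S_{\mathrm{nc}}$ follows by approximating $\mathbf{1}_{B_r}/|B_r|$ by Schwartz kernels, with the regularized value identified as $\cR f(\xi)$ via part (i).

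The principal obstacle in both parts is the uniformity of the restriction-type estimate across the family of cutoffs indexed by $\eps$ (respectively $r$): for (i) the Schwartz nature of $\chi$ allows the $TT^\ast$ kernel to be absorbed into the standard Carleson--Sj\"olin analysis, whereas for (ii) the rougher indicator $\mathbf{1}_{B_1}$ forces the $L^2$-detour which is precisely what imposes the more restrictive range $p<8/7$.
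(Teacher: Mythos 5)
Your overall skeleton --- linearize the maximal operator, prove a uniform restriction bound for the linearized family, run the standard density argument, and for (ii) reduce averages of $|\widehat f|$ to averages of $|\widehat f|^2$ by Cauchy--Schwarz --- matches the paper. But the central step, the uniform $L^p\to L^q$ maximal restriction inequality in the range $p<\frac43$, is asserted rather than proved, and the mechanism you propose for it does not work. Bounding the $TT^*$ kernel of the linearized operator by a smoothed $|\xi-\xi'|^{-1/2}$ is the Stein--Tomas route; the paper explicitly remarks that this method ``does not seem to be applicable, even for the Hardy-Littlewood maximal function,'' and in any case the decay $|\widehat{d\sigma}(\xi)|\lesssim|\xi|^{-1/2}$ only yields $p\le\frac65$, not the Carleson--Sj\"olin range $p<\frac43$ that you claim. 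To reach $p<\frac43$ the paper instead squares the adjoint (extension) operator, $\|\cE g\|_{p'}=\|(\cE g)^2\|_{p'/2}^{1/2}$, changes variables via $\Phi(x,y)=\big(x+y,\ph(x)+\ph(y)\big)$, proves a uniform $L^2(A)\to L^2(A)$ bound for $(T^\la_{\overline\eps})^*T^\la_{\overline\eps}$ by dominating its kernel by compositions of one-dimensional Hardy--Littlewood maximal operators --- this is where the uniformity in the linearizing functions $\eps(\cdot)$ is actually won --- then interpolates with the trivial $L^1\to L^\infty$ bound and absorbs the Jacobian $|\ph'(x)-\ph'(y)|^{-(r-1)}$ by Hardy--Littlewood--Sobolev. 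None of this is present, or replaced by an equivalent argument, in your proposal; the dyadic/Littlewood--Paley ``alternative route'' is likewise only named, not executed.

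For (ii), your Cauchy--Schwarz reduction is the right first move, but what follows (``Plancherel together with a Stein--Tomas-type local $L^2$-restriction estimate on short arcs'') is not a proof and misidentifies where the range comes from. The paper's point is that $|\widehat f|^2=\widehat h$ with $h=f*f^*\in L^r$, $r=\frac p{2-p}$ by Young's inequality, so the smooth maximal restriction theorem already proved applies to $h$ provided $r<\frac43$, which is exactly $p<\frac87$; it is not an interpolation between $p=\frac65$ and $p=\frac43$. Two smaller points: a maximal inequality in $L^q(S_{\rm nc},ds)$ with a uniform constant is not what the Carleson--Sj\"olin machinery gives, since $\kappa$ may degenerate on $S_{\rm nc}$ --- one must work with the affine arclength $\kappa^{1/3}\,ds$ (or exhaust by compacta on which $\kappa$ is bounded below) and then transfer the almost-everywhere statement; and at $\eps\equiv0$ your kernel $\chi_{\eps(\xi)}*\chi_{\eps(\xi')}$ degenerates to a delta rather than to any restriction kernel, which signals that the object you wrote down is not the one on which the Carleson--Sj\"olin analysis acts.
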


Several questions remain open, regarding extensions to less regular curves, to other values of $p$ in the  range $\frac87\le p<\frac43$, or to higher dimensions. We just mention here that, in dimension $d\ge3$, our method gives results for a class of  curves including $\Gamma(t)=(t,t^2,\dots,t^d)$.

Theorem \ref{lebesgue} is a direct consequence of certain ``maximal restriction theorems'' concerning  restrictions to $S$ of  truncated maximal functions of the Fourier transform. Since  maximal restriction inequalities may also have an intrinsic interest, we go beyond what is strictly needed to deduce Theorem \ref{lebesgue} and consider (truncated) two-parameter  maximal functions, such as the strong maximal function, relative to any coordinate system in $\bR^2$.

In Theorem \ref{vertical} we prove that, for a convex $C^2$ curve, the two-parameter maximal operator defined  in \eqref{Mv}, is $L^p-L^q$ bounded for $p,q$ in the full range of validity of the restriction theorem, with the $L^q$-norm on $S$ relative to  affine arc-length measure. 

In Corollary \ref{tildeM} we deduce   the same $L^p-L^q$ estimates, but in the smaller range $p<\frac87$, for the truncated strong maximal function, which does not only control averages of $\hat f$, but also those  of~$|\hat f|$. 

The proof is based on the Kolmogorov-Seliverstov-Plessner   linearization method \cite[Ch. XIII]{Zb}. This leads to proving uniform estimates for a family of linear operators to which a modification of the basic approach of [CS,Z] for curves in $\bR^2$ can be applied.  For this reason our method is limited to the two-dimensional context.
Unfortunately, the usual $TT^*$ method of Stein-Tomas does not seem to be applicable,  even for the Hardy-Littlewood maximal function.

\section{The strong maximal function of $\widehat f$ along a curve
}\label{sec-vertical}

Let $S=\{\Gamma(t):t\in I\}$, where $\Gamma$ is a $C^2$ curve in $\bR^2$ with nonnegative signed curvature, i.e., with $\kappa(t)=\det(\Gamma',\Gamma'')(t)\ge0$. Denote by $d\mu(t)=\kappa^{\frac13}(t)\,dt$ the pull-back to~$I$ of the affine arclength measure on~$S$.

We assume for simplicity that  $\Gamma(x)=\big(x,\ph(x)\big)$ is the graph of a convex $C^2$ function $\ph$ on a bounded  interval $I$.   Notice that the measure $\mu$ is concentrated on the set where $\kappa=\ph''>0$.

We consider  the two-parameter maximal function\footnote{Theorem \ref{vertical} also holds if $\chi\otimes\chi$ is replaced by a general $\chi\in\cS(\bR^2)$, because this can be expanded into a rapidly decreasing  series $\sum_j\chi'_j\otimes\chi''_j$.}
\be\label{Mv}
\cM f(x)=\sup_{0<\eps',\eps''<1}\Big|\int \widehat f\big(x+s,\ph(x)+t\big) \chi_{\eps'}(s) \chi_{\eps''}(t)\,ds\,dt\Big|\ ,
\ee
where $\chi_\eps(\cdot)=\eps\inv\chi(\cdot/\eps)$, with $\chi\in\cS(\bR)$, even,  with  $\int\chi=1$.

\begin{theorem}\label{vertical}
The inequality
\be
\|\cM f\|_{L^q(I,\mu)}\le C_p\|f\|_{L^p(\bR^2)}\ ,
\ee
holds for $1\le p<\frac43$ and $p'\ge 3q$.
\end{theorem}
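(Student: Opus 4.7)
The strategy is to apply the Kolmogorov-Seliverstov-Plessner linearization, reducing the maximal estimate to a uniform $L^p$-$L^q$ bound for a linear family, then adapt the oscillatory-integral argument of Carleson-Sj\"olin/Zygmund and its extension to convex curves by Sj\"olin~\cite{Sj} to this family.

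First, by monotone convergence the supremum defining $\cM f$ may be restricted to $(\eps', \eps'')$ in a finite grid in $(0,1)^2$. For such a truncated supremum, one may select measurable grid-valued functions $\eps'(x), \eps''(x)$ and a unimodular function $\omega(x)$ that pointwise realise $\cM f(x)$. Unwinding the convolution by Fourier inversion, the theorem reduces to the uniform estimate
\[
\|T_\eps f\|_{L^q(I,\mu)} \le C_p\, \|f\|_{L^p(\bR^2)},
\]
where
\[
T_\eps f(x) = \int_{\bR^2} f(y)\, \widehat\chi(\eps'(x)\, y_1)\, \widehat\chi(\eps''(x)\, y_2)\, e^{-2\pi i\, \Gamma(x)\cdot y}\, dy, \qquad \Gamma(x) = (x, \ph(x)),
\]
over all measurable $\eps'(x), \eps''(x) \in (0,1)$.

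The trivial endpoint $L^1 \to L^\infty$ follows from $|T_\eps f(x)| \le \|\widehat\chi\|_\infty^2\|f\|_1$. By Riesz-Thorin interpolation, it therefore suffices to prove the estimate at a single interior point on the critical line $p' = 3q$ with $p$ close to $4/3$, and interpolate. At this critical endpoint, my plan is to adapt the Carleson-Sj\"olin/Zygmund oscillatory-integral argument. A direct computation of the $T_\eps T_\eps^*$-kernel gives the factorisation
\[
K_\eps(x, z) = \bigl(\chi_{\eps'(x)} * \chi_{\eps'(z)}\bigr)(x - z)\,\bigl(\chi_{\eps''(x)} * \chi_{\eps''(z)}\bigr)(\ph(x) - \ph(z)),
\]
a product of two Schwartz-type bumps in the two coordinate differences. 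The convexity of $\ph$ and the affine arclength weight $\kappa^{1/3}\,dx$ supply exactly the correct scaling against the curvature $\kappa = \ph''$ to carry out an adapted Sj\"olin-type argument on this factorised kernel.

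The principal obstacle is that the Fourier multiplier $\widehat\chi(\eps'(x)y_1)\widehat\chi(\eps''(x)y_2)$ depends on the evaluation point $x$, so $T_\eps$ is not the restriction of a fixed Fourier multiplier, and, as stressed in the introduction, the Stein-Tomas $TT^*$ approach is not available here, even for the Hardy-Littlewood analogue. The structural feature that rescues the argument is the product form of the multiplier, which produces the factorisation of $K_\eps$ above into two one-dimensional bumps. The hard part will be to convert this factorisation, paired with the affine arclength weight and the convexity of $\ph$, into a uniform $L^p \to L^q(\mu)$ bound at the critical endpoint, adapting the Carleson-Sj\"olin/Zygmund/Sj\"olin oscillatory-integral analysis to the $x$-dependent amplitudes.
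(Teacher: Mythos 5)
Your set-up is fine as far as it goes: the Kolmogorov--Seliverstov--Plessner linearization, the reduction to a uniform bound for the linearized operators $T_\eps$, and the trivial $L^1\to L^\infty$ endpoint all match the paper (and the Riesz--Thorin reduction to a single critical point near $p=4/3$ is legitimate, though it buys nothing, since you must still prove the critical estimate for $p_0$ arbitrarily close to $4/3$). The problem is that everything after this point --- which is the entire content of the theorem --- is announced rather than proved, and the route you sketch is the one the paper explicitly flags as unavailable. You compute the kernel of $T_\eps T_\eps^*$ on the curve and propose to run ``an adapted Sj\"olin-type argument'' on it; but the Carleson--Sj\"olin/Zygmund method is not a $TT^*$ argument at all. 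It operates on the dual (extension) side via the squaring identity $\|\cE g\|_{p'}=\|(\cE g)^2\|_{p'/2}^{1/2}$, and your proposal never performs this squaring, so the factorised $K_\eps(x,z)$ you exhibit is not the object the method acts on. You yourself write that ``the hard part will be to convert this factorisation \dots into a uniform $L^p\to L^q(\mu)$ bound''; that hard part is the theorem.

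Concretely, the missing steps are: (i) passing to the adjoint extension operator $\cE_{\eps',\eps''}$ and squaring it, then changing variables $(x,y)\mapsto\big(x+y,\ph(x)+\ph(y)\big)$, which is injective on each of $\{x<y\}$, $\{x>y\}$ by convexity and has Jacobian $\ph'(y)-\ph'(x)$; (ii) a uniform $L^p(A)\to L^{p'}(\bR^2)$ bound ($1\le p\le2$) for the resulting operator carrying \emph{four} $x$-dependent amplitudes $\widehat\chi(\eps_j\,\cdot)$ --- this is the paper's Lemma \ref{Tpp'}, proved at $L^2$ by a $T^*T$ computation in which the product of dilated bumps is rewritten as a single Schwartz bump at the \emph{maximal} of the scales, $A^2$ is partitioned according to whether that maximum is attained at $z$ or at $w$, and each piece is dominated by compositions of one-dimensional Hardy--Littlewood maximal operators (hence the strong maximal function); and (iii) the Hardy--Littlewood--Sobolev step, after the substitution $u=\ph'(x)$, $v=\ph'(y)$, which absorbs the Jacobian factor $|\ph'(x)-\ph'(y)|^{1-r}$ and produces exactly the affine arclength weight $\kappa^{1/3}$ and the exponent $q'=(p'/3)'$. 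None of these appears in your proposal, and without (ii) in particular there is no mechanism for obtaining a bound that is uniform over the measurable choices of $\eps'(x),\eps''(x)$, which is precisely where the maximal character of the theorem resides.
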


\begin{proof} We may and shall assume  $f\in\cS(\bR^2)$ and, since $\mu$ is finite, $p'= 3q$ by H\"older's inequality.
We linearize $\cM $ by defining, for fixed measurable functions $\eps'(x),\eps''(x)$ on $I$ with values in $(0,1)$,
\bea\label{Reps}
\cR_{\eps',\eps''} f(x)&=\int \widehat f\big(x+s,\ph(x)+t\big) \chi_{\eps'}(s) \chi_{\eps''}(t)\,ds\,dt\\
&=\int f(\xi,\eta)\int e^{-i(\xi (x+s)+\eta(\ph(x)+t))}\chi_{\eps'}(s) \chi_{\eps''}(t)\,ds\,dt\,d\xi\,d\eta\\
&=\int \widehat\chi\big(\eps'(x)\xi\big)\widehat\chi\big(\eps''(x)\eta\big) e^{-i(\xi x+\eta\ph(x))}f(\xi,\eta)\,d\xi\,d\eta\ .
\eea

The formal adjoint of $\cR_{\eps',\eps''}$ is 
\bea\label{Eeps}
\cE_{\eps',\eps''} g(\xi,\eta)&=\cR^*_{\eps',\eps''} g(\xi,\eta)\\
&=\int_I\widehat\chi\big(\eps'(x)\xi\big)\widehat\chi\big(\eps''(x)\eta\big)e^{i(\xi x+\eta\ph(x))}g(x)\kappa^\frac13(x)\,dx\ .
\eea

It suffices to prove the inequality
\be\label{Eeps-estimate}
\|\cE_{\eps',\eps''} g\|_{L^{p'}(\bR^2)}\le C_p \|g\|_{L^{q'}(I,\mu)}\ ,\qquad g\in C^\infty_c(I)\ ,
\ee
uniformly in the functions $\eps'(x),\eps''(x)$. We introduce a truncation in $\xi$ and $\eta$, in order to gain
 decay at infinity for $\cE_{\eps',\eps''} g$. Fixing another function $\chi_0$ smooth on $\bR$, supported in $[-2,2]$ and equal to 1 on $[-1,1]$, we define, for  $\la\gg1$, 
\be\label{Eepslambda}
\cE_{\eps',\eps''}^\la g(\xi,\eta)=\chi_0\Big(\frac\xi\la\Big)\chi_0\Big(\frac\eta\la\Big)\int_I\widehat\chi\big(\eps'(x)\xi\big)\widehat\chi\big(\eps''(x)\eta\big)e^{i(\xi x+\eta\ph(x))}g(x)\kappa^\frac13(x)\,dx\ .
\ee

It will then suffice to prove \eqref{Eeps-estimate} with $\cE_{\eps',\eps''}$ replaced by $\cE^\la_{\eps',\eps''}$, uniformly in $\eps'(x),\eps''(x)$ and $\la$. 

We start from the identity
\be\label{square}
\|\cE_{\eps',\eps''}^\la g\|_{p'}=\big\|(\cE_{\eps',\eps''}^\la g)^2\|^\half_{p'/2}\ .
\ee

If $U$ is the open subset of $I$ where $\kappa(x)>0$, the measure $\mu$ is concentrated on $U$, so we have
\beas
(\cE_{\eps',\eps''}^\la g)^2(\xi,\eta)&=\chi_0^2\Big(\frac\xi\la\Big)\chi_0^2\Big(\frac\eta\la\Big)\int_{U^2}\widehat\chi\big(\eps'(x)\xi\big)\widehat\chi\big(\eps''(x)\eta\big)\widehat\chi\big(\eps'(y)\xi\big)\widehat\chi\big(\eps''(y)\eta\big)\\
&\qquad\qquad\qquad\qquad\qquad\qquad\qquad e^{i(\xi (x+y)+\eta(\ph(x)+\ph(y))}g(x)\kappa^\frac13(x)g(y)\kappa^\frac13(y)\,dx\,dy\\
&=\chi_0^2\Big(\frac\xi\la\Big)\chi_0^2\Big(\frac\eta\la\Big)\int_{U^2}\widehat\chi\big(\eps'(x)\xi\big)\widehat\chi\big(\eps''(x)\eta\big)\widehat\chi\big(\eps'(y)\xi\big)\widehat\chi\big(\eps''(y)\eta\big)\\
&\qquad\qquad\qquad\qquad\qquad\qquad\qquad e^{i(\xi (x+y)+\eta(\ph(x)+\ph(y))}G_0(x,y)\,dx\,dy\ ,
\eeas
with $G_0=(g\kappa^\frac13)\otimes(g\kappa^\frac13)$. 

We want to make the change of variables $z_1=x+y$, $z_2=\ph(x)+\ph(y)$.
It follows from the convexity of $\ph$ that the map $\Phi(x,y)=\big(x+y,\ph(x)+\ph(y)\big)$ is injective on each of the subsets $U^2_\pm=\{(x,y)\in U^2: x\lessgtr y\}$ and that $\det\Phi'(x,y)=\ph'(y)-\ph'(x)\ne0$ on $U^2$.

With $A=\Phi(U_+)=\Phi(U_-)$, we set, for $z=(z_1,z_2)\in A$,
\bea\label{change}
&\big(x_\pm(z),y_\pm(z)\big)=(\Phi_{|_{U^2_\pm}})\inv(z)\\
&{\eps'_1}^\pm(z)=\eps'\big(x_\pm(z)\big)\ ,\qquad {\eps'_2}^\pm(z)=\eps'\big(y_\pm(z)\big)\\
&{\eps''_1}^\pm(z)=\eps''\big(x_\pm(z)\big)\ ,\qquad {\eps''_2}^\pm(z)=\eps''\big(y_\pm(z)\big)\\
&G_\pm(z)=\frac{G_0\big(x_\pm(z),y_\pm(z)\big)}{\big|\ph'\big(x_\pm(z)\big)-\ph'\big(y_\pm(z)\big)\big|}\ .
\eea

Then 
\bea\label{E2}
\cE_{\eps',\eps''}^\la g(\xi,\eta)^2&=\chi_0^2\Big(\frac\xi\la\Big)\chi_0^2\Big(\frac\eta\la\Big)\\
&\qquad \sum_\pm\int_A\widehat\chi\big({\eps'_1}^\pm(z)\xi\big)\widehat\chi\big({\eps''_1}^\pm(z)\eta\big)\widehat\chi\big({\eps'_2}^\pm(z)\xi\big)\widehat\chi\big({\eps''_2}^\pm(z)\eta\big)e^{i(\xi z_1+\eta z_2)}G_\pm(z)\,dz\ .
\eea

We are so led to consider the operator
$$
T^\la_{\overline\eps}G(\xi,\eta)=\chi_0^2\Big(\frac\xi\la\Big)\chi_0^2\Big(\frac\eta\la\Big)\int_A\widehat\chi\big(\eps'_1(z)\xi\big)\widehat\chi\big(\eps''_1(z)\eta\big)\widehat\chi\big(\eps'_2(z)\xi\big)\widehat\chi\big(\eps''_2(z)\eta\big)e^{i(\xi z_1+\eta z_2)}G(z)\,dz\ ,
$$
for arbitrary measurable functions $\overline\eps=(\eps'_1,\eps''_1,\eps'_2,\eps''_2)$ on $A$ with values in $(0,1)^4$ and arbitrary continuous functions~$G$ on $A$.

\begin{lemma}\label{Tpp'}
For $1\le p\le 2$, $T^\la_{\overline\eps}$ is bounded from $L^p(A)$ to $L^{p'}(\bR^2)$, uniformly in $\overline\eps$ and $\la$.
\end{lemma}

\begin{proof}
The statement is trivial for $p=1$. 

For $p=2$ we prove the equivalent statement that $(T^\la_{\overline\eps})^*T^\la_{\overline\eps}:L^2(A)\longrightarrow L^2(A)$. We have
$$
(T^\la_{\overline\eps})^*T^\la_{\overline\eps}G(z)=\int_A K^\la_{\overline\eps}(z,w)G(w)\,dw\ ,
$$
where, for $(z,w)\in A^2$,
\bea\label{Keps}
K^\la_{\overline\eps}(z,w)&=\int_{\bR^2} e^{-i(\xi,\eta)\cdot(z-w)}\\
&\qquad \chi_0^4\Big(\frac\xi\la\Big)\chi_0^4\Big(\frac\eta\la\Big)\widehat\chi\big(\eps'_1(z)\xi\big)\widehat\chi\big(\eps'_2(z)\xi\big)\widehat\chi\big(\eps''_1(z)\eta\big)\widehat\chi\big(\eps''_2(z)\eta\big)\\
&\qquad\qquad\qquad\qquad \widehat\chi\big(\eps'_1(w)\xi\big)\widehat\chi\big(\eps'_2(w)\xi\big)\widehat\chi\big(\eps''_1(w)\eta\big)\widehat\chi\big(\eps''_2(w)\eta\big)\,d\xi\,d\eta
\ .
\eea

Let
\bea\label{eps(zwla)}
\eps'(z,w,\la)&=\max\big\{\eps'_1(z),\eps'_2(z),\eps'_1(w),\eps'_2(w),\la\inv\big\}\\
\eps''(z,w,\la)&=\max\big\{\eps''_1(z),\eps''_2(z),\eps''_1(w),\eps''_2(w),\la\inv\big\}\ .
\eea

Using iteratively the property that, given two Schwartz functions $f,g$ on $\bR$, the product $f(at)g(bt)$ can be expressed as $h\big((a\vee b)t\big)$ with each Schwartz norm $\|h\|_{(N)}$ controlled by the same norm of $f$ and $g$,
we can write
\beas
\chi_0^4\Big(\frac\xi\la\Big)\widehat\chi\big(\eps'_1(z)\xi\big)\widehat\chi\big(\eps'_2(z)\xi\big)\widehat\chi\big(\eps'_1(w)\xi\big)\widehat\chi\big(\eps'_2(w)\xi\big)&=\psi'_{z,w,\la}\big(\eps'(z,w,\la)\xi\big)\\
\chi_0^4\Big(\frac\eta\la\Big)\widehat\chi\big(\eps''_1(z)\eta\big)\widehat\chi\big(\eps''_2(z)\eta\big)\widehat\chi\big(\eps''_1(w)\eta\big)\widehat\chi\big(\eps''_2(w)\eta\big)&=\psi''_{z,w,\la}\big(\eps''(z,w,\la)\eta\big)\ ,
\eeas
with $\psi'_{z,w,\la},\psi''_{z,w,\la}\in\cS(\bR)$  uniformly bounded in each Schwartz norm.

Then
\be\label{Ksimpler}
K^\la_{\overline\eps}(z,w)=\frac 1{\eps'(z,w,\la)\eps''(z,w,\la)}\widehat{\psi'}_{z,w,\la}\Big(\frac{z_1-w_1}{\eps'(z,w,\la)}\Big)\widehat{\psi''}_{z,w,\la}\Big(\frac{z_2-w_2}{\eps''(z,w,\la)}\Big)\ ,
\ee
so that, for every $N$, we have the uniform bound
$$
\big|K^\la_{\overline\eps}(z,w)\big|\le C_N \frac1{\eps'(z,w,\la)\eps''(z,w,\la)}\Big(1+\frac{|z_1-w_1|}{\eps'(z,w,\la)}\Big)^{-N}\Big(1+\frac{|z_2-w_2|}{\eps''(z,w,\la)}\Big)^{-N}\ .
$$


We now make a double partition of $A^2$, depending on which of the three parameters $z,w,\la$ determines the value of $\eps'$ and $\eps''$ respectively:
$$
A^2=E'_1\cup E'_2\ ,\qquad A^2=E''_1\cup E''_2\ ,
$$
such that 
$$
\eps'(z,w,\la)=\begin{cases} \eps'_1(z)\text{ or }\eps'_2(z)\text{ or }\la\inv&\text{ on }E'_1\\ \eps'_1(w)\text{ or }\eps'_2(w)&\text{ on }E'_2\ , \end{cases}\qquad \eps''(z,w,\la)=\begin{cases} \eps''_1(z)\text{ or }\eps''_2(z)\text{ or }\la\inv&\text{ on }E''_1\\ \eps''_1(w)\text{ or }\eps''_2(w)&\text{ on }E''_2 \ . \end{cases}
$$

On any intersection $E'_j\cap E''_k=E_{jk}$, each  of $\eps'$ and $\eps''$ depends on only one of the variables $z,w$.
We decompose
\beas
\big|(T^\la_{\overline\eps})^*T^\la_{\overline\eps}G(z)\big|&\le\sum_{j,k=1}^2\int_A {\mathbf 1}_{E_{jk}}(z,w)\big|K^\la_{\overline\eps}(z,w)\big|G(w)\big|\,dw\\
&=\sum_{j,k=1}^2U_{jk}|G|(z)\ ,
\eeas

In the case $j=k=1$ we have
\beas
U_{11}|G|(z)&\le C \int_A \frac1{\tilde\eps'(z)\tilde\eps''(z)}\Big(1+\frac{|z_1-w_1|}{\tilde\eps'(z)}\Big)^{-2}\Big(1+\frac{|z_2-w_2|}{\tilde\eps''(z)}\Big)^{-2}\big|G(w)\big|\,dw\\
&\le CM_s G(z)\ ,
\eeas
where $M_s$ denotes the strong maximal function in $\bR^2$. Hence $U_{11}$ is bounded on $L^2$.

In the case $j=k=2$, it is sufficient to observe that $U_{22}^*$ has the same form as $U_{11}$ to obtain the same conclusion.

Suppose now that $j\ne k$, say $j=1,k=2$, i.e., with $\eps'$ depending on $z$ and $\eps''$ on $w$.
Then, extending $G$ to be 0 on $\bR^2\setminus A$,
\beas
U_{12}|G|(z)&\le C \int_A \frac1{\tilde\eps'(z)\tilde\eps''(w)}\Big(1+\frac{|z_1-w_1|}{\tilde\eps'(z)}\Big)^{-2}\Big(1+\frac{|z_2-w_2|}{\tilde\eps''(w)}\Big)^{-2}\big|G(w)\big|\,dw\\
&= C \int_\bR\frac1{\tilde\eps'(z)}\Big(1+\frac{|z_1-w_1|}{\tilde\eps'(z)}\Big)^{-2}\bigg(\int_\bR \frac1{\tilde\eps''(w)}\Big(1+\frac{|z_2-w_2|}{\tilde\eps''(w)}\Big)^{-2}\big|G(w_1,w_2)\big|\,dw_2\bigg)\,dw_1\\
&= C\int_\bR\frac1{\tilde\eps'(z)}\Big(1+\frac{|z_1-w_1|}{\tilde\eps'(z)}\Big)^{-2} (T|G|)(w_1,z_2)\,dw_1\\
&\le C\,M_1(T|G|)(z_1,z_2)\ ,
\eeas
where 
$M_1f(z_1,z_2)$ denotes the one-dimensional Hardy-Littlewood maximal function of $f(\cdot,z_2)$ evaluated at $z_1$ 
and 
$$
Tf(w_1,z_2)=\int_\bR \frac1{\tilde\eps''(w)}\Big(1+\frac{|z_2-w_2|}{\tilde\eps''(w)}\Big)^{-2}f(w_1,w_2)\,dw_2\ .
$$

In analogy with the previous case, the operator $T^*$,
$$
T^*h(w_1,w_2)=\int_\bR \frac1{\tilde\eps''(w)}\Big(1+\frac{|z_2-w_2|}{\tilde\eps''(w)}\Big)^{-2}h(w_1,z_2)\,dz_2\ ,
$$
 is dominated by 
 $$
\sup_{0<\eps<1}\int_\bR \frac1{\eps}\Big(1+\frac{|z_2-w_2|}{\eps}\Big)^{-2}\big|h(w_1,z_2)\big|\,dz_2= M_2h(w_1,w_2)\ ,
$$
$M_2$ being now the Hardy-Littlewood maximal operator in the second variable.
It follows that $T$, and hence $U_{12}$,  is bounded on $L^2$ and this proves the statement for $p=2$.

The conclusion for $1<p<2$ follows by Riesz-Thorin interpolation.
\end{proof}

We go back to the proof of Theorem \ref{vertical}, recalling that we are assuming $p'=3q$. Observing that $p'/2>2$ and combining together \eqref{square}, \eqref{E2} and Lemma \ref{Tpp'}, we have
$$
\|\cE^\la_{\eps',\eps''} g\|_{L^{p'}(\bR^2)}\le C\big(\|G_+\|_{L^r(A)}+\|G_-\|_{L^r(A)})^\half\ ,
$$
with $G_\pm$ as in \eqref{change} and $r=(p'/2)'=\frac p{2-p}$. To express the right-hand side in terms of the original function $g$, we find that
\beas
\|G_+\|_{L^r(A)}^r&=\int_A \Big|\frac{G_0\big(x_+(z),y_+(z)\big)}{\ph'\big(x_+(z)\big)-\ph'\big(y_+(z)\big)}\Big|^r\,dz\\
&=\int_{U_+} \frac{|G_0(x,y)|^r}{|\ph'(x)-\ph'(y)|^{r-1}}\,dx\,dy\\
&=\int_{U_+} \frac{|g(x)|^r|g(y)|^r}{|\ph'(x)-\ph'(y)|^{r-1}}\kappa(x)^\frac r3\kappa(y)^\frac r3\,dx\,dy\ .
\eeas

Making the change of variables
$$
u=\ph'(x)\ ,\qquad v=\ph'(y)\ ,
$$
and setting $x(u)=(\ph')\inv(u)$, $y(v)=(\ph')\inv(v)$, we obtain that
\beas
\|G_+\|_{L^r(A)}^r&=\int_{\ph'(U_+)} \frac{|g\big(x(u)\big)|^r|g\big(y(v)\big)|^r}{|u-v|^{r-1}}\kappa\big(x(u)\big)^{\frac r3-1}\kappa\big(y(v)\big)^{\frac r3-1}\,du\,dv\ .
\eeas

Notice that $1\le r<2$, so that we can interpret, up to a constant factor, the integral as the pairing $\lan I^{2-r}f,f\ran$, where $I^\alpha$ denotes fractional integration of order $\alpha$ and $f(u)=|g\big(x(u)\big)|^r\kappa\big(x(u)\big)^{\frac r3-1}$. By the Hardy-Littlewood-Sobolev inequality, 
$$
\|G_+\|_{L^r(A)}^r\le C_r\|f\|_{L^s(\ph'(U_+))}^2\ ,
$$
with $s=\frac 2{3-r}$. The same estimate holds for $G_-$, so that, for this value of $s$,
\beas
\|\cE^\la_{\eps',\eps''} g\|_{L^{p'}(\bR^2)}&\le C_p\|f\|_{L^s(\ph'(U))}^\frac1r\\
&=C_p\Big(\int_{\ph'(U)}|g\big(x(u)\big)|^\frac{2r}{3-r}\kappa\big(x(u)\big)^{-\frac 23}\,du\Big)^\frac{3-r}{2r}\\
&=C_p\Big(\int_U|g(x)|^\frac{2r}{3-r}\kappa(x)^{\frac 13}\,du\Big)^\frac{3-r}{2r}\\
&=\|g\|_{L^\frac{2r}{3-r}(I,\mu)}\ .
\eeas

But  $\frac{2r}{3-r}=(p'/3)'=q'$ with $q$ as in the statement of the theorem.
\end{proof}

Consider now the truncated strong maximal function of $\widehat f$,
\be\label{tildeM}
\cM^+f(x)=\sup_{0<\eps',\eps''<1/4}\frac1{4\eps'\eps''}\int_{|s|<\eps',|t|<\eps''}\big|\widehat f\big(x+s,\ph(x)+t\big) \big|\,ds\,dt\ ,\qquad x\in I.
\ee

From Theorem \ref{vertical} we obtain the following inequality for $ \cM^+$ for a more restricted range of $p$.

\begin{corollary}\label{tildeM}
The inequality
\bea
\| \cM^+f\|_{L^q(I,\mu)}\le C_p\|f\|_{L^p(\bR^2)}\ ,\qquad f\in\cS(\bR^2),
\eea
holds for $1\le p<\frac87$ and $p'\ge 3q$.
\end{corollary}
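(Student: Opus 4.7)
The plan is to reduce the operator $\cM^+$ to the operator $\cM$ of Theorem~\ref{vertical} via the linearization $|\widehat f|^2=\widehat h$, where $h:=f*\tilde f$ with $\tilde f(y):=\overline{f(-y)}$, and then to control $\|h\|_{L^{\tilde p}}$ by $\|f\|_{L^p}^2$ via Young's convolution inequality.

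First, I would select in \eqref{Mv} a Schwartz function $\chi$ that is nonnegative and even with $\int\chi=1$; by continuity such a $\chi$ is bounded below by some constant $c>0$ on $[-1,1]$, so after rescaling one has the pointwise domination $(2\eps)\inv\mathbf 1_{(-\eps,\eps)}\le (2c)\inv\chi_\eps$ valid for every $\eps\in(0,1)$. Applying the Cauchy-Schwarz inequality to the $L^1$-average appearing in the definition of $\cM^+ f(x)$, and then using $\widehat h=|\widehat f|^2\ge 0$ together with the box-indicator domination, one obtains the pointwise estimate
\[
\cM^+ f(x)^2 \le \frac{1}{4c^2}\,\cM h(x),\qquad x\in I,
\]
where the right-hand side makes sense because the sup in $\cM$ is taken over $(0,1)^2\supset(0,1/4)^2$ and the integrand is nonnegative, so no absolute value is lost.

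Next I would set $\tilde p:=p/(2-p)$ and $\tilde q:=q/2$; a direct computation shows that $\tilde p'=p'/2$, so the conditions $\tilde p<\tfrac43$ and $\tilde p'\ge 3\tilde q$ translate precisely into the assumed $p<\tfrac87$ and $p'\ge 3q$. Theorem~\ref{vertical} applied to the Schwartz function $h$ therefore yields $\|\cM h\|_{L^{q/2}(I,\mu)}\le C_p\|h\|_{L^{\tilde p}(\bR^2)}$. Young's inequality, with exactly the exponent relation $1/\tilde p=2/p-1$, gives $\|h\|_{L^{\tilde p}(\bR^2)}\le\|f\|_{L^p(\bR^2)}^2$. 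Chaining the three estimates produces the desired $\|\cM^+ f\|_{L^q(I,\mu)}\le C_p\|f\|_{L^p(\bR^2)}$.

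No essential obstacle arises beyond the bookkeeping of exponents: the narrowing of the $p$-range from $\tfrac43$ to $\tfrac87$ is exactly the cost of trading $f\in L^p$ for $f*\tilde f\in L^{p/(2-p)}$ via Young, so that the Young exponent $\tilde p$ fits inside the restriction range of Theorem~\ref{vertical}. The only point requiring a bit of care is the choice of a nonnegative $\chi$, which is what allows the box-indicator domination to be combined with the nonnegativity of $\widehat h$.
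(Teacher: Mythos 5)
Your proof is correct and follows essentially the same route as the paper: squaring $\cM^+f$ via Cauchy--Schwarz, passing to $h=f*f^*$ with $\widehat h=|\widehat f|^2$, dominating the box average by a dilated $\chi\otimes\chi$ average, and applying Theorem~\ref{vertical} with the exponents $\tilde p=p/(2-p)$, $\tilde q=q/2$ together with Young's inequality. Your explicit choice of a nonnegative $\chi$ bounded below near the origin just makes precise a point the paper leaves implicit.
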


\begin{proof}
As before, we assume $p'=3q$.
Let $h=f*f^*$, where $f^*(x,y)=\overline{f(-x,-y)}$. Then $\widehat h=|\widehat f|^2$, so that $\|h\|_r\le\|f\|_p^2$, with $r=\frac p{2-p}<\frac43$. Then , for $s$ such that $r'=3s$, $\|\cM h\|_s\le C_r\|f\|_p^2$. But, for $\eps',\eps''<\frac14$ and $\chi$ as in \eqref{Mv},
\beas
\frac1{4\eps'\eps''}\int_{|s|<\eps',|t|<\eps''}\big|\widehat f\big(x+s,\ph(x)+t\big) \big|\,ds\,dt&\le \Big(\frac1{4\eps'\eps''}\int_{|s|<\eps',|t|<\eps''}\big|\widehat f\big(x+s,\ph(x)+t\big) \big|^2\,ds\,dt\Big)^\half\\
&=\Big(\frac1{4\eps'\eps''}\int_{|s|<\eps',|t|<\eps''}\widehat h\big(x+s,\ph(x)+t\big) \,ds\,dt\Big)^\half\\
&\le \Big( \int \widehat h\big(x+s,\ph(x)+t\big) \chi_{4\eps'}(s)\chi_{4\eps''}(t)\,ds\,dt\Big)^\half\\
&\le \big(\cM h(x)\big)^\half\ .
\eeas

Hence $\| \cM^+f\|_{L^q(I,\mu)}\le \|\cM h\|_{q/2}^\half$ and it can be easily checked that $q/2=s$.
\end{proof}

\bigskip

\section{Lebesgue points of $\widehat f$ along a curve}\label{sec-differentiation}
\medskip

Adapting standard arguments, cf. [S], we obtain the following  reformulation of Theorem \ref{lebesgue} (ii), where $B_\eps$ denotes the disk of radius $\eps$ centered at $0$.

\begin{corollary}
Let $1\le p<\frac87$ and $S$ be a $C^2$ curve in the plane. Given $f\in L^p(\bR^2)$, for almost every $x\in I$ relative to affine arclength,
$$
\lim_{\eps\to0}\frac1{|B_\eps|}\int_{B_\eps}\big|\widehat f\big(x+x',\ph(x)+y'\big)-\cR f\big(x,\ph(x)\big)\big|\,dx'\,dy'=0\ .
$$
\end{corollary}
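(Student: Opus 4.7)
The plan is to adapt the classical Lebesgue-differentiation template, using Corollary \ref{tildeM} as the substitute for the Hardy--Littlewood maximal estimate that drives the standard argument. Introduce the sublinear functional
$$\Omega f(x)=\limsup_{\eps\to0}\frac1{|B_\eps|}\int_{B_\eps}\bigl|\widehat f\bigl(x+x',\ph(x)+y'\bigr)-\cR f\bigl(x,\ph(x)\bigr)\bigr|\,dx'\,dy',$$
defined $\mu$-a.e.\ on $I$; since $d\mu=\kappa^{\frac13}\,dx$ is concentrated on $\{\kappa>0\}$, showing that $\Omega f=0$ $\mu$-a.e.\ is equivalent to the stated conclusion for arclength-a.e.\ point at which $\kappa$ does not vanish.

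For any $g\in\cS(\bR^2)$ the function $\widehat g$ is continuous, $\cR g=\widehat g|_S$, and the $(x',y')$-averages converge to $\widehat g(x,\ph(x))$ uniformly in $x$, so $\Omega g\equiv0$. Given $f\in L^p(\bR^2)$ and $\delta>0$, I would pick $g\in\cS(\bR^2)$ with $\|f-g\|_p<\delta$ and set $h=f-g$. By sublinearity $\Omega f\le\Omega g+\Omega h=\Omega h$. Enclosing $B_\eps$ in the axis-parallel square $[-\eps,\eps]^2$ and using the triangle inequality pointwise gives
$$\Omega h(x)\le C\,\cM^+h(x)+\bigl|\cR h\bigl(x,\ph(x)\bigr)\bigr|$$
for $\mu$-a.e.\ $x\in I$, since for $\eps<1/4$ the average of $|\widehat h|$ over $B_\eps$ is dominated by $C$ times the strong maximal average appearing in \eqref{tildeM}.

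A single application of Chebyshev's inequality at exponent $q$, combined with Corollary \ref{tildeM} for $\cM^+$ and the restriction inequality for $\cR$, then yields
$$\mu\bigl(\{x\in I:\Omega f(x)>\lambda\}\bigr)\le C\lambda^{-q}\bigl(\|\cM^+h\|_{L^q(I,\mu)}^q+\|\cR h\|_{L^q(I,\mu)}^q\bigr)\le C'\lambda^{-q}\delta^q.$$
Letting $\delta\to0$ forces $\mu(\{\Omega f>\lambda\})=0$ for every $\lambda>0$, and taking $\lambda$ through a sequence tending to $0$ yields $\Omega f=0$ $\mu$-a.e., which is the desired conclusion.

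I do not anticipate any genuine obstacle here: the Schwartz approximation, the comparison of the disk average with the rectangular maximal quantity in \eqref{tildeM}, and the Chebyshev/density step are all textbook. The constraint $p<\frac87$ enters exclusively through the availability of the maximal estimate in Corollary \ref{tildeM}, so no sharper range can be extracted without first improving that corollary.
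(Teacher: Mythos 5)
Your proposal is correct and follows essentially the same route as the paper: approximate $f$ by a Schwartz function $g$ for which the averages converge trivially, dominate the limsup for $h=f-g$ by $C\,\cM^+h+|\cR h|$, and conclude from Corollary \ref{tildeM} and the restriction inequality that the limsup vanishes $\mu$-a.e. The only cosmetic difference is that the paper passes directly from $\|F\|_{L^q(I,\mu)}\le C\tau$ for all $\tau>0$ to $F=0$ $\mu$-a.e., whereas you interpose a Chebyshev weak-type step; these are equivalent.
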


\begin{proof} We may restrict ourselves to a subset of $S$ which is the graph of a $C^2$ function $\ph$ on an interval $I$ with $\ph''\ne0$. Let $\mu$ be as in Section \ref{sec-vertical}.

Given $\tau>0$, let $g\in\cS(\bR^2)$ such that $\|f-g\|_p<\tau$. Since $\cR g=\widehat g_{|_S}$,
\beas
F(x)&=\limsup_{\eps\to0}\frac1{|B_\eps|}\int_{B_\eps}\big|\widehat f\big(x+x',\ph(x)+y'\big)-\cR f\big(x,\ph(x)\big)\big|\,dx'\,dy'\\
&\le \limsup_{\eps\to0}\frac1{|B_\eps|}\int_{B_\eps}\big|\widehat {(f-g)}\big(x+x',\ph(x)+y'\big)\big|\,dx'\,dy'+\big|\cR (f-g)\big(x,\ph(x)\big)\big|\\
&\le \cM^+(f-g)(x)+\big|\cR (f-g)\big(x,\ph(x)\big)\big|\ .
\eeas

Hence, if $q=p'/3$, $\|F\|_{L^q(I,\mu)}\le C\tau$ for every $\tau>0$, i.e., $F=0$ $\mu$-a.e.
\end{proof}

\vskip1cm


\begin{thebibliography}{99}

\newcommand\auth[1]{{\textrm{#1}}}
\newcommand\papertitle[1]{{\textrm {#1}}}
\newcommand\jourtit[1]{{\textit{\frenchspacing#1}}}

\newcommand\journum[1]{{\textbf{#1}}}
\newcommand\booktit[1]{{\textit{#1}}}


	\bibitem{CS}
		\auth{L. Carleson, P. Sj\"olin},
		\papertitle{Oscillatory integrals and a multiplier problem for the disc},
		\jourtit{. Studia Math.}, \journum{44}\,(1972), 287--299.
	
		\bibitem{Fe}
		\auth{C. Fefferman},
		\papertitle{Inequalities for strongly singular convolution operators},
		\jourtit{Acta Math.}, \journum{124}\,(1970), 9--36.
     
     	\bibitem{Sj}
		\auth{P. Sj\"olin},
		\papertitle{Fourier multipliers and estimates of the Fourier transforms of measures carried by smooth curves in $\bR^2$},
		\jourtit{. Studia Math.}, \journum{51}\,(1974), 169--182.
	

		
	\bibitem{S}
		\auth{E. M. Stein},
		\booktit{\it Singular Integrals and Differentiability Properties of Functions}, Princeton University Press, Princeton, N.J., 1970.
		
			\bibitem{T}
		\auth{P. Tomas},
		\papertitle{A restriction theorem for the Fourier transform},
		\jourtit{Bull. Amer. Math. Soc.}, \journum{81}\,(1975), 477--478.

			\bibitem{Zb}
		\auth{A. Zygmund},
		\booktit{\it Trigonometric Series}, Cambridge University Press, 1959.
		
\bibitem{Z}
		\auth{A. Zygmund},
		\papertitle{On Fourier coefficients and transforms of functions of two variables},
		\jourtit{Studia Math.}, \journum{50}\,(1974), 189--201.


\end{thebibliography}
\end{document}